\newcommand{\RedefinitSymbole}[1]{%
  \expandafter\let\csname old\string#1\endcsname=#1 \let#1=\relax
\newcommand{#1}{\csname old\string#1\endcsname\,}%
}
\def\cf{\textit{cf.}\kern.3em}
\def\resp{\textit{resp.}\kern.3em}
\renewcommand{\k}{\kern2pt}
\numberwithin{equation}{section} \makeatletter
\DeclareMathOperator{\Aut}{Aut}
\DeclareMathOperator{\spec}{Spec}
\DeclareMathOperator{\ord}{ord}
\newtheorem{proposition}[equation]{Proposition}
\newtheorem{theorem}[equation]{Theorem}
\newtheorem{corollary}[equation]{Corollary}
\theoremstyle{definition}
\newtheorem{definition}[equation]{Definition}
\newtheorem{remark}[equation]{\textbf{Remark}}
\newcommand{\mun}{$\mathcal{M}_{1,n}$}
\newcommand{\mbun}{$\overline{\mathcal{M}}_{1,n}$}
\newcommand{\virg}[1]{\textquotedblleft#1\textquotedblright}
\begin{document}

\date{}
\title{{The orbifold cohomology of moduli of hyperelliptic curves}}
\subjclass{14H10 14N35}
\author{Nicola Pagani}
\address{\flushleft KTH Matematik, Lindstedtsv\"agen 25, S-10044 Stockholm\\}
\email{pagani@kth.se}

\begin{abstract}We study the inertia stack of $[\mathcal{M}_{0,n}/S_n]$, the quotient stack of the moduli space of smooth genus $0$ curves with $n$ marked points via the action of the symmetric group $S_n$. Then we see how from this analysis we can obtain a description of the inertia stack of $\mathcal{H}_g$, the moduli stack of hyperelliptic curves of genus $g$. From this, we can compute additively the Chen--Ruan (or orbifold) cohomology of $\mathcal{H}_g$.\end{abstract}
\maketitle 
\section{Introduction}

A hyperelliptic curve of genus $g$ is a smooth algebraic curve that admits a $2:1$ map to $\mathbb{P}^1$, and thus has $2g+2$ branch points. From its very definition, it is clear that the moduli stack of genus $g$ hyperelliptic curves $\mathcal{H}_g$ admits a map onto the moduli stack $[\mathcal{M}_{0,2g+2}/S_{2g+2}]$, which is an isomorphism at the level of coarse moduli spaces. The foundations for moduli of hyperelliptic curves, as well as the precise definition of the previous map, can be found in \cite{lonsted} (in particular Theorem 5.5).

The last decade has seen tremendous improvements in our understanding of the moduli space of hyperelliptic curves $\mathcal{H}_g$. We mention here some of the recent achievements that are relevant to the present work. In the paper \cite{arsievistoli}, $\mathcal{H}_g$ is described as a moduli stack of cyclic covers of the projective line. As a consequence of this description, the authors are able to determine its Picard group. Along these lines, the Picard group of the Deligne-Mumford compactification $\overline{\mathcal{H}}_g$ was computed (see \cite{cornpic}), and very recently the whole integral Chow ring of  $\mathcal{H}_g$ was computed in \cite{fulghesu} (see also \cite{edidin}, \cite{gorviv}). In the last years, much effort was also made in studying the automorphism groups of hyperelliptic curves \cite{shaska1}, \cite{shaska2}, \cite{shaska3}, \cite{shaska4}.

In this paper we deal with rational cohomology and Chow group with rational coefficients. From both these points of view, the moduli stacks $\mathcal{H}_g$ are trivial. The triviality of $H^*(\mathcal{H}_g, \mathbb{Q})$ follows from \cite[Theorem 2.13]{kisinlehrer}, while the triviality of $A^*_{\mathbb{Q}}(\mathcal{H}_g)$ follows from its description as finite quotient of the affine variety $\mathcal{M}_{0,n}$. Still some nontriviality can be measured with rational coefficients, but one has to consider instead the \emph{orbifold cohomology} or the \emph{stringy Chow group}. The orbifold cohomology as a vector space (or Chen--Ruan cohomology) of an orbifold $\mathcal{X}$ is obtained by adding to the usual cycles of $\mathcal{X}$ the cycles of all the \emph{twisted sectors} of $\mathcal{X}$. The twisted sectors are orbifolds that parametrize pairs $(x, g)$ where $x$ is a point of $\mathcal{X}$ and $g \in \Aut(x)$. The new cycles are then given an unconventional degree, which is the sum of their original degree as cycles inside their twisted sector $Y$, plus a rational number (called \emph{age} or \emph{degree shifting number}) that depends on the normal bundle $N_Y \mathcal{X}$. 

The orbifold cohomology of moduli spaces of curves is studied in \cite{pagani1}, \cite{pagani2}, \cite{spencer2} (see also the PhD thesis \cite{paganitesi}, \cite{spencer}). The present work has some nontrivial intersection with \cite{pagani2} and \cite{spencer2}, since in these two papers in particular the orbifold cohomology and stringy Chow group of $\mathcal{M}_2= \mathcal{H}_2$ are described.

The main result of this paper is Theorem \ref{principale}, where we give for any $g$ a closed formula for the \emph{orbifold Poincar\'e polynomial} of $\mathcal{H}_g$, that is, a \virg{polynomial}\footnote{We call it polynomial in analogy with the ordinary Poincar\'e polynomial, although the exponents of the variable $q$ are not natural but rational.}, whose coefficient of $q^i$ corresponds to the dimension of the group $H^{i}$.

To achieve this result, we first describe in Section \ref{sezione2} the twisted sectors of $[\mathcal{M}_{0,n}/S_n]$ as quotients of certain $\mathcal{M}_{0,k}$ modulo a subgroup of $S_k$. 

Then, in Section \ref{sezione3}, we study the twisted sectors of $\mathcal{H}_g$. If $g$ is odd, we see that the twisted sectors of $\mathcal{H}_g$ are simply the twisted sectors of $[\mathcal{M}_{0,2g+2}/S_{2g+2}]$ repeated twice. If $g$ is even, most of the twisted sectors of $\mathcal{H}_g$ correspond to the twisted sectors of $[\mathcal{M}_{0,2g+2}/S_{2g+2}]$, whose distinguished automorphism is not an involution, repeated twice. The remaining few twisted sectors of $\mathcal{H}_g$ are still described as quotients of moduli of genus $0$, pointed curves modulo the action of a certain subgroup of the symmetric group on the marked points.

Finally, in Section \ref{sezione4} we compute all the degree shifting numbers, and we write the explicit results by recollecting the results of the previous sections.

\subsection{Notation} We work over $\mathbb{C}$; cohomologies and Chow groups are taken with rational coefficients. Orbifold for us means smooth Deligne--Mumford stack, and we always work within the category of Deligne--Mumford stacks. If a finite group $G$ acts on a scheme (stack) $X$, $[X/G]$ is the stack quotient and $X/G$ is the quotient as a scheme. We call $\mu_N:= \mathbb{Z}_N^{\vee}$ the group of characters of $\mathbb{Z}_N$, and $\mu_N^*$ the subgroup whose elements are the invertible characters. We make an implicit use of the relative language of schemes. For instance, when no confusion can arise, we speak of a genus $g$ smooth curve, meaning a family of genus $g$ smooth curve over a certain base $S$.

\section{Definition of Orbifold Cohomology}
In this section we define orbifold cohomology. For a more detailed study of this topic, we address the reader to \cite[Section 3]{agv2} for the various inertia stacks, and to \cite[Section 7.1]{agv2} for the degree shifting number (the original reference is \cite{chenruan}). What we call orbifold cohomology is the graded vector space underlying the Chen--Ruan cohomology ring (or algebra): the latter is a more refined object that we will not introduce in this work.

We introduce the following natural stack associated to a Deligne--Mumford stack $X$, which points to where $X$ fails to be an algebraic space.

\begin{definition} \label{definertia} (\cite[4.4]{agv1}, \cite[Definition 3.1.1]{agv2}) Let $X$ be an algebraic stack. The \emph{inertia stack} $I(X)$ of $X$ is defined as
$$
I(X) := \coprod_{N \in \mathbb{N}} I_N(X)
$$
where $I_N(X)(S)$ is the following groupoid:
\begin{enumerate}
\item The objects are pairs $(\xi, \alpha)$, where $\xi$ is an object of $X$ over $S$, and $\alpha\colon \mu_N \to \Aut(\xi)$ is an injective homomorphism.
\item The morphisms are the morphisms $g\colon \xi \to \xi'$ of the groupoid $X(S)$, such that $g \cdot \alpha(1)= \alpha'(1) \cdot g$.  
\end{enumerate}
We also define $I_{TW}(X):= \coprod_{N > 1}I_N(X)$, in such a way that $$I(X)=I_1(X) \coprod I_{TW}(X).$$ The connected components of $I_{TW}(X)$ are called \emph{twisted sectors} of the inertia stack of $X$, or also twisted sectors of $X$. The inertia stack comes with a natural forgetful map $f\colon I(X) \to X$.

\end {definition}

We observe that, by our very definition, $I_N(X)$ is an open and closed substack of $I(X)$, but it rarely happens  that it is connected. One special case is when $N$ equals to $1$: in this case the map $f$ restricted to $I_1(X)$ induces an isomorphism of the latter with $X$. The connected component $I_1(X)$ will be referred to as the \emph{untwisted sector}.

 We also observe that given a generator of $\mu_N$, we obtain an isomorphism of $I(X)$ with $I'(X)$, where the latter is defined as the ($2$-)fiber product $X \times_{X \times X} X$ where both morphisms $X \rightarrow X \times X$ are the diagonals. Over more general fields than $\mathbb{C}$, the object we defined in \ref{definertia} is usually called \emph{cyclotomic inertia stack}, whereas the \emph{inertia stack} is the above ($2$-)fiber product. We address the reader to \cite[Section 3]{agv2} for more details on the inertia stack and its variants.

\begin{remark} \label{mappaiota} There is an involution $\iota\colon I_N(X) \to I_N(X)$, which is induced by the map $\iota'\colon \mu_N \to \mu_N$, that is $\iota'(\zeta):= \zeta^{-1}$.
\end{remark}

\begin{proposition} \label{liscezza1} \cite[Corollary 3.1.4]{agv2} Let $X$ be a smooth algebraic stack. Then the stacks $I_N(X)$ (and therefore $I(X)$ itself) are smooth.
\end{proposition}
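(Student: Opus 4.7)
The plan is to reduce to an étale-local situation and use the classical fact that fixed loci of finite group actions on smooth schemes are smooth in characteristic zero. Since smoothness is étale-local on the source, I may assume that $X=[U/G]$ for $U$ a smooth affine scheme and $G$ a finite group acting on $U$, as every smooth Deligne--Mumford stack admits such presentations étale-locally.

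In this local model, an object $\xi$ of $X(S)$ corresponds (étale-locally on $S$) to a $G$-equivariant map $P\to U$ from a $G$-torsor $P\to S$, and $\Aut(\xi)$ sits inside $G$ as a stabilizer. Unwinding Definition \ref{definertia}, a pair $(\xi,\alpha)$ with $\alpha\colon\mu_N\hookrightarrow\Aut(\xi)$ corresponds locally to a pair $(u,g)$ with $g\in G$ of order exactly $N$ and $u\in U^{g}$, modulo the obvious action of the centralizer. This produces a canonical identification
$$
I_N(X)\;\cong\;\coprod_{[g],\,\ord(g)=N}\bigl[U^{g}\big/C_G(g)\bigr],
$$
where the disjoint union ranges over conjugacy classes in $G$ of elements of order $N$, and $U^{g}\subset U$ is the fixed-point subscheme, on which the centralizer $C_G(g)$ naturally acts.

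Now I would argue that each $U^{g}$ is smooth. Working over $\mathbb{C}$, for $g$ of finite order $N$ and any fixed point $u\in U^{g}$, the Cartan linearization lemma provides étale coordinates on $U$ near $u$ in which the cyclic action of $\langle g\rangle$ becomes linear on $T_u U$. Equivalently, averaging by $\frac{1}{N}\sum_{i=0}^{N-1}g^{i}$ gives a projector onto $(T_u U)^{g}$, whose kernel supplies local equations cutting out $U^{g}$; these equations are independent, so $U^{g}$ is smooth at $u$ with tangent space $(T_u U)^{g}$. Since $C_G(g)$ is a finite group acting on a smooth scheme, the quotient stack $[U^{g}/C_G(g)]$ is smooth, and hence so is $I_N(X)$ locally, and then globally by gluing, because the construction of $I_N$ is functorial. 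The smoothness of $I(X)$ then follows from the decomposition $I(X)=\coprod_{N\in\mathbb{N}}I_N(X)$.

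The only genuine obstacle is the smoothness of $U^{g}$, which is standard in characteristic zero; the rest is bookkeeping about the local presentation of $X$ and the functoriality of $I_N$.
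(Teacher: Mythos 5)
The paper does not actually prove this statement: it is quoted directly from Abramovich--Graber--Vistoli \cite[Corollary 3.1.4]{agv2}, so your argument should be compared with theirs rather than with anything in the text. Your route is the standard characteristic-zero one and it is essentially correct: reduce to an \'etale chart $[U/G]$ with $U$ smooth and $G$ finite, identify $I_N([U/G])$ with $\coprod_{[g],\,\ord(g)=N}[U^{g}/C_G(g)]$ (legitimate here since a choice of generator of $\mu_N$ identifies the cyclotomic inertia used in Definition \ref{definertia} with the ordinary one, as the paper itself remarks), and invoke smoothness of fixed loci via Cartan linearization or averaging. The difference from \cite{agv2} is one of generality: their proof avoids any characteristic-zero linearization by exploiting that $\mu_N$ is a linearly reductive group scheme over an arbitrary base, which is precisely why they work with the cyclotomic inertia stack; your averaging by $\frac{1}{N}\sum g^{i}$ would fail if $N$ were divisible by the characteristic, but over $\mathbb{C}$ this costs nothing. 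One point you dismiss as ``bookkeeping'' deserves a sentence: reducing to the chart requires not just that smoothness is \'etale-local, but that $I_N$ of the chart is the pullback of $I_N(X)$; the natural map $I'(f)$ of Definition \ref{pullinertia} is an isomorphism only because the local structure theorem for Deligne--Mumford stacks provides charts $[U/G_x]\to X$ that are stabilizer-preserving (inducing isomorphisms on automorphism groups), not merely \'etale. With that made explicit, your proof is complete over $\mathbb{C}$.
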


We now study the behaviour of the inertia stack under arbitrary morphisms of stacks. 

\begin {definition}\label{pullinertia} Let $f\colon X \to Y$ be a morphism of stacks. We define $f^*(I(Y))$ as the stack that makes the following diagram $2-$cartesian: $$
\xymatrix{f^*(I(Y)) \ar[r]^{I(f)} \ar[d] \ar@{}|{\square}[dr] & I(Y) \ar[d] \\
X \ar[r]^{f} & Y 
}$$
and $I(f)$ as the map that lifts $f$ in the diagram. Obviously, there is an induced map that we call $I'(f)$, which maps $I(X) \to f^*(I(Y))$.
\end {definition}

We now define the degree shifting number for the twisted sectors of the inertia stack of a smooth stack $X$. With $R{\mu_N}$, we denote the representation ring of $\mu_N$.

\begin {definition} \cite[Section 7.1]{agv2} Let $\rho\colon\mu_N \to \mathbb{C}^*$ be a group homomorphism. It is determined by an integer $0 \leq k \leq N-1$ as $\rho( \zeta_N)= \zeta_N^k$. We define a function \emph{age}:
$$
\textrm{age}(\rho)=k/N.
$$
This function extends to a unique group homomorphism:
$$
\textrm{age}\colon R \mu_N \to \mathbb{Q}.
$$
\end {definition}

\noindent We now define the age of a twisted sector $Y$ of a smooth stack $X$. Let $f$ be the restriction to $Y$ of the natural forgetful map $I(X) \to X$.
\begin{definition} \label{definitionage} (\cite[Section 3.2]{chenruan}, \cite[Definition 7.1.1]{agv2}) Let $Y$ be a twisted sector and $g\colon \spec \mathbb{C} \to Y$ a point. 
Then the pull-back via $f \circ g$ of the tangent sheaf, $(f \circ g)^*(T_X)$, is a representation of $\mu_N$ on a finite dimensional vector space. 
 We define $$a(Y):= \textrm{age}((f \circ g)^*(T_X)).$$
\end{definition}

\noindent We can then define the orbifold, or Chen--Ruan, degree.

\begin{definition} \label{defcoomorb2} (\cite[Definition 3.2.2]{chenruan}) We define the $d-th$ degree orbifold cohomology group as follows:
$$
H^d_{CR}(X, \mathbb{Q}):= \bigoplus_i  H^{d-2 a(X_i,g_i)}(X_i, \mathbb{Q})
$$
where the sum is over all twisted sectors. The \emph{orbifold Poincar\'e polynomial} of $X$ is
$$
P^{CR}_{X}(q):= \sum_{i \in \mathbb{Q}^+} \dim \left(H^{i}_{CR}(X) \right) q^i .
$$
\end{definition}
\begin{remark} \label{stringychow} One can also define the stringy Chow group and its unconventional grading in complete analogy with the above definition. See \cite[Section 7.3]{agv2} for this construction.
\end{remark}

\section{The inertia stack of the configuration of unordered points on the Riemann sphere} \label{sezione2}

In this section, we study the cohomology of the inertia stack of $[\mathcal{M}_{0,n}/S_n]$ (also known in the literature as $\widetilde{\mathcal{M}}_{0,n}$). For this, it is enough to give a description of the coarse moduli spaces of the twisted sectors of the inertia stack of $[\mathcal{M}_{0,n}/S_n]$. We thus describe the coarse moduli spaces of the twisted sectors of the latter stack as quotients of the kind $\mathcal{M}_{0,k}/S$, for $S$ a certain subgroup of $S_k$. The cohomology of these quotients is well known. The cohomology of $\mathcal{M}_{0,n}$ was first computed as a representation of the symmetric group $S_n$ by Getzler \cite[5.6]{getzleroperads} (see also \cite{kisinlehrer}).

In particular, we shall use the following result.
\begin{proposition} The Poincar\'e polynomial of $\mathcal{M}_{0,n+2}/S_n$ is
$$
P^0_{n+2;n,1,1}(q)=\sum_{i=0}^{n-1} q^i .
$$
The Poincar\'e polynomial of $\mathcal{M}_{0,n+2}/S_n \times S_2$ is
$$
P^0_{n+2;n,2}(q) =\begin{cases} 1 & n=1 \\ \sum_{i=0}^{\lfloor \frac{n-2}{4}\rfloor} q^i+ q^{i+1} & n>1 .
\end{cases}
$$
\end{proposition}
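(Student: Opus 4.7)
Proof proposal:

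The plan is to realise each quotient as an explicit open substack of a weighted projective space and then read off its rational Poincaré polynomial from its class in the Grothendieck ring of varieties. Using the $\mathrm{PGL}_2$-action to fix the two distinguished marked points to $0$ and $\infty$, one gets $\mathcal{M}_{0,n+2}\cong F(\mathbb{C}^*,n)/\mathbb{C}^*$, with $\mathbb{C}^*$ acting freely by simultaneous scaling. Symmetrising the $n$ free marks and passing to elementary symmetric functions identifies
\[
\mathcal{M}_{0,n+2}/S_n\;\cong\;\mathbb{P}(1,2,\ldots,n)\setminus\bigl(\{a_n=0\}\cup\{\Delta=0\}\bigr),
\]
where the $a_i$ are the coefficients of a monic polynomial $z^n+a_1z^{n-1}+\cdots+a_n$ and the weighted action is $a_i\mapsto t^i a_i$. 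For the $S_n\times S_2$-quotient, the extra $S_2$ swaps $0\leftrightarrow\infty$ (equivalently $z\mapsto 1/z$); on coefficients this is the reciprocal-polynomial involution, sending $[a_1:\cdots:a_n]$ to $[a_{n-1}/a_n:\cdots:1/a_n]$ after rescaling.

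For the first formula, I would compute $[\mathcal{M}_{0,n+2}/S_n]\in K_0(\mathrm{Var}_\mathbb{C})$ by inclusion–exclusion: $[\mathbb{P}(1,\ldots,n)]=1+L+\cdots+L^{n-1}$, $[\{a_n=0\}]=[\mathbb{P}(1,\ldots,n-1)]=1+L+\cdots+L^{n-2}$, and $\{\Delta=0\}$ is handled by the birational parametrisation $(\alpha,q(z))\mapsto(z-\alpha)^2 q(z)$ with $q$ monic of degree $n-2$, together with the analogous treatment of the intersection $\{a_n=0\}\cap\{\Delta=0\}$. The outcome is
\[
[\mathcal{M}_{0,n+2}/S_n]\;=\;L^{n-1}-L^{n-2}+\cdots+(-1)^{n-1}.
\]
Since $\mathcal{M}_{0,n+2}$ is a hyperplane-arrangement complement, its cohomology is pure Tate of Hodge type $(k,k)$ in degree $k$, and this property descends to the finite $S_n$-quotient; under the corresponding Grothendieck-class-to-Poincaré-polynomial dictionary the above class gives exactly $1+q+q^2+\cdots+q^{n-1}$. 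For the second formula I would run the same template, but now average (or take invariants) under the reciprocal involution above; its fixed locus in $\mathbb{P}(1,\ldots,n)$ parametrises reciprocal polynomials, and its dimension and topology depend on whether $n$ is even (an extra middle coordinate $a_{n/2}$ is self-paired) or odd, which is the precise source of the parity split in the stated piecewise formula.

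The main obstacle is the honest computation of $[\{\Delta=0\}]\subset\mathbb{P}(1,2,\ldots,n)$ and of its intersection with $\{a_n=0\}$: the natural parametrisation of the discriminant by (double root)$+$(remaining monic factor) fails to be injective along the triple-root stratum and, more generally, whenever the marked double root coincides with a root of the remaining factor, so one must stratify the discriminant locus and perform a secondary inclusion–exclusion inside it. A secondary subtlety, but one that is essentially formal, is justifying that the passage from $[X]$ to $P_X(q)$ is valid for these spaces; this rests on the pure-Tate structure of $H^*(\mathcal{M}_{0,n+2})$ and its preservation under finite quotients.
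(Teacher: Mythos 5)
Your geometric setup is sound: identifying $\mathcal{M}_{0,n+2}/S_n$ with the locus $\{a_n\neq 0,\ \Delta\neq 0\}$ in $\mathbb{P}(1,2,\ldots,n)$ is correct, and the purity argument (hyperplane-arrangement complements have $H^k$ of type $(k,k)$, and this passes to invariants under a finite group) legitimately lets you convert a class in $K_0$ (or an E-polynomial) into Betti numbers. Note, though, that the paper does none of this: its proof is a one-line citation of Kisin--Lehrer [Theorem 2.9], which directly gives the relevant \emph{equivariant} Poincar\'e polynomials. For your first formula the plan can be completed, but more easily than you suggest: rather than stratifying the discriminant hypersurface (whose parametrisation issues you rightly flag), use the standard recursion for squarefree monic polynomials with nonzero constant term, $s_n+s_{n-1}=L^n-L^{n-1}$, which immediately gives $s_n=(L-1)(L^{n-1}-L^{n-2}+\cdots+(-1)^{n-1})$ and hence, after dividing by the $\mathbb{C}^*$-factor, your claimed class.

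The genuine gap is the second formula. Taking the quotient by the swap of $0$ and $\infty$ requires knowing the trace of this involution on each $H^k(\mathcal{M}_{0,n+2}/S_n)$ (equivalently an equivariant E-polynomial, a twisted point count, or an explicit model of the quotient variety); the E-polynomial of $X/\iota$ is \emph{not} determined by $[X]$ together with the fixed locus $X^\iota$, and your sketch stops exactly at the point where the real computation begins. Moreover, your stated mechanism misreads the shape of the answer: the case split in the proposition is $n=1$ versus $n>1$, not a parity-of-$n$ split, and the dependence on $n$ enters with period four through $\lfloor\frac{n-2}{4}\rfloor$. Since each $H^k(\mathcal{M}_{0,n+2}/S_n)$ is one-dimensional, the content of the second formula is precisely the list of degrees $k$ in which the involution acts by $+1$ rather than $-1$, and that sign pattern (already nontrivial for $n=2$, where the involution acts trivially on $H^1\cong\mathbb{Q}$ even though it has fixed points) is exactly what the equivariant result of Kisin--Lehrer supplies and what your proposal leaves uncomputed.
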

\begin{proof} It follows from \cite[Theorem 2.9]{kisinlehrer}.
\end{proof}

It will be convenient to have a definition for the set where each invertible character of $\mathbb{Z}_N$ is identified with its inverse:
\begin{definition} \label{identificati} We define $\widetilde{\mu}_N^*$ as the quotient set $\mu_N^*/\mathbb{Z}_2$ where $\overline{1} (\zeta_N):= \zeta_N^{-1}$. If $N$ is even, we define $\overline{\mu}_N^*$ to be the quotient set $\mu_N^*/\mathbb{Z}_2$ where the action of $\overline{1}$ is defined to be: $\overline{1} (\zeta_N):= -\zeta_N^{-1}$.
\end{definition}
The following proposition describes the inertia stack of $[\mathcal{M}_{0,n}/S_n]$. 

\begin{proposition} \label{inertian} We describe the coarse moduli spaces of the twisted sectors of $[\mathcal{M}_{0,n}/S_n], n \geq 3$.
\begin{enumerate}
\item Suppose $N>2$, or $n$ odd. If there exists $a \in \{0,1,2\}$ such that $n=kN+a$: $$I_N([\mathcal{M}_{0,n}/S_n])= \begin{cases} \coprod_{\chi \in \widetilde{\mu}_N^*} \left(\mathcal{M}_{0,k+2}/S_k, \chi \right) & a=0,2\\\coprod_{\chi \in {\mu_N^*}} \left(\mathcal{M}_{0,k+2}/S_k , \chi \right) & a=1 \end{cases}$$
and $I_N([\mathcal{M}_{0,n}/S_n])$ is empty otherwise.
\item if $n$ is even, $n=:2g+2$: $$I_2([\mathcal{M}_{0,n}/S_n]) = \left(\mathcal{M}_{0, g+2}/S_g \times S_2,-1 \right) \coprod \left(\mathcal{M}_{0, g+3}/S_{g+1} \times S_2,-1 \right) $$
\end{enumerate}

\begin{proof}

Let $C$ be a smooth genus $0$ curve and $\alpha$ an automorphism of finite order $N$ of it. From the Riemann--Hurwiz formula, $\alpha$ has at least two fixed points, and if it had more it would be the identity. We can choose coordinates on $C$ in such a way that the two fixed points are $0$ and $\infty$, and $\alpha$ is the multiplication by a primitive $N-$th root of unity. Now let $C'= C / \langle \alpha \rangle$ be the quotient curve. Given a suitable choice of coordinates on $C$ and $C'$, the quotient map $C \to C'$ becomes the map $z \to z^N$. An automorphism $\alpha$ of $C$ is an automorphism of $C$ with  $n$ unordered points if the unordered points are chosen in the subsets of $C$ that are invariant under the action of $\langle \alpha \rangle$. The finite subsets of $C$ invariant
under $\langle \alpha \rangle$ contain exactly $k N + 2$ points: $k$ orbits of $N$ elements each, plus the two points fixed by $\alpha$.

Let us first deal with the case when $N>2$. In this case there is at most one choice of $k \in \mathbb{N}^+$ and $a \in \{0,1,2\}$ such that $n=kN +a$. The set of $n=k N+a$ marked points corresponds to a set of $k+a$ marked points on $C'$, where the $a$ points are a subset of the branch divisor. 

If $a$ is equal to $1$, there is a choice of a point $p$ in $C$ that is the only point that is both in the set of $n$ points, and a fixed point for $\alpha$. Then $\alpha$ determines a $\chi \in \mu_N^*$: the character of the representation of $\alpha$ on $T_p C$. Conversely, from the set of $k$ unordered points on $C'$ plus two branch points and a character $\chi \in \mu_N^*$, one can reconstruct the set of $n=k N+1$ points on $C$ and the automorphism $\alpha$.

If $a$ equals $0$ or $2$, there is no such a choice of a distinguished point $p$ in the set of two fixed points of $\alpha$. Then $\alpha$ acts on the set of fixed points, thus determining two inverse characters in $\mu_N^*$, and an ordering of the two ramification, equivalently branch, points. In the same way as before, $\alpha$ gives then an equivalence class in the set $\widetilde{\mu}_N^*$ (see Definition \ref{identificati}). Conversely, from the set of $k$ unordered points on $C'$ plus two ordered branch points and an element $\chi \in \widetilde{ \mu}_N^*$, one can reconstruct the set of $n=k N+a$ points on $C$ and the automorphism $\alpha$.

If $N$ is equal to $2$, the argument is the same, the only differences being that $\widetilde{\mu}_2^*= \mu_2^*$ and that, being $\alpha$ an involution, it does not distinguish between the two ramification (equivalently, branch) points of $C \to C'=C/ \langle \alpha \rangle$.
\end{proof}
\end{proposition}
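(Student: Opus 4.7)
The plan is to describe an $S$-point of $I_N([\mathcal{M}_{0,n}/S_n])$ explicitly as a triple $(C,P,\alpha)$ where $C$ is a smooth genus $0$ curve, $P$ is an unordered set of $n$ marked points, and $\alpha \in \Aut(C,P)$ has order exactly $N$. Then I would identify which geometric data uniquely parametrize such a triple up to isomorphism, and recognize the corresponding moduli functor as a quotient of the form $\mathcal{M}_{0,k+a}/H$.

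The first step is to fix a coordinate on $C$ so that $\alpha(z) = \zeta_N z$ (this is possible because any finite-order automorphism of $\mathbb{P}^1$ has exactly two fixed points by Riemann--Hurwitz, unless it is the identity). The quotient map $C \to C' := C/\langle \alpha \rangle$ then becomes $z \mapsto z^N$, branched exactly over the two fixed points $0$ and $\infty$. Any $\alpha$-invariant set of $n$ marked points descends to a set of $k+a$ marked points on $C'$, where $kN$ points form $k$ free orbits, and $a \in \{0,1,2\}$ is the number of fixed points that lie in $P$. This gives the constraint $n = kN+a$ and, up to the choices of coordinate, identifies $(C,P)$ with a marked genus $0$ curve of type $(0,k+a)$ where the $a$ distinguished branch points are remembered.

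Next I need to track the automorphism $\alpha$ itself, not just $P$. The choice of $\alpha$ (once coordinates are fixed) is a primitive $N$-th root of unity, i.e.\ an invertible character $\chi \in \mu_N^*$ given by the action of $\alpha$ on the tangent space at a fixed point. When $a=1$, the single marked fixed point is distinguished, so $\chi \in \mu_N^*$ is well-defined. When $a=0$ or $a=2$, the two fixed points play symmetric roles: swapping them (an element of $S_n$ acting on the marked set, or a change of coordinate interchanging $0$ and $\infty$) replaces $\chi$ with $\chi^{-1}$, so only the class in $\widetilde{\mu}_N^* = \mu_N^*/\mathbb{Z}_2$ is well-defined. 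This gives the disjoint union indexed by characters in each case. The symmetric group acting on the marked points must be reduced to those permutations that commute with $\alpha$: these are precisely the permutations of the $k$ free orbits (giving $S_k$) combined with, in case $a=2$, a possible swap of the two fixed branch points (giving the $S_2$ factor).

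The main subtlety, and the part I expect to require extra care, is the case $N=2$ with $n$ even. Here $\widetilde{\mu}_2^* = \mu_2^* = \{-1\}$, so the ``up to inverse'' identification is trivial; more importantly, the equation $n = 2k+a$ admits two valid decompositions $(k,a=0)$ and $(k-1,a=2)$ corresponding geometrically to whether the two fixed points of the hyperelliptic involution belong to the marked set or not. These two possibilities must be listed separately, accounting for both components $\mathcal{M}_{0,g+2}/(S_g\times S_2)$ and $\mathcal{M}_{0,g+3}/(S_{g+1}\times S_2)$ appearing in part (2); the proof then reduces to verifying that these two cases exhaust all involutions of $(C,P)$ when $n=2g+2$. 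For all other $N$, the value $(k,a)$ with $n=kN+a$ is unique, so no such doubling occurs and part (1) follows from collecting the data above.
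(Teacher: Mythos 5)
Your overall strategy is the same as the paper's: normalize $\alpha$ to $z\mapsto \zeta_N z$, pass to the quotient map $z\mapsto z^N$, observe that an invariant marked set consists of $k$ free orbits plus $a\in\{0,1,2\}$ of the two fixed points, so that $n=kN+a$, attach characters of $\alpha$ at the fixed points, and treat $N=2$ separately because an even $n$ admits the two decompositions $(k,a)=(g+1,0)$ and $(g,2)$. Your treatment of $a=1$ and of part (2) matches the paper.

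There is, however, a genuine gap exactly at the step where you decide which symmetries to quotient by when $a=0,2$. You claim the admissible symmetries are "the permutations of the marked points that commute with $\alpha$'', namely $S_k$ on the free orbits together with (only for $a=2$) the swap of the two fixed points. But for $N>2$ the coordinate change interchanging the two fixed points does \emph{not} commute with $\alpha$: it conjugates $\alpha$ to $\alpha^{-1}$. Since morphisms in $I_N$ must intertwine the distinguished automorphisms on the nose (Definition \ref{definertia}), such a swap is an isomorphism in the inertia stack only if one simultaneously replaces $\chi$ by $\chi^{-1}$; indeed, for $N>2$ the pairs $(C,P,\alpha)$ and $(C,P,\alpha^{-1})$ are in general \emph{not} isomorphic in $I_N$, even though they have the same underlying configuration and the same class in $\widetilde{\mu}_N^*$. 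Your write-up spends the same swap twice --- once to identify $\chi$ with $\chi^{-1}$, and once (with the incorrect "commutes with $\alpha$'' justification, and only in the case $a=2$) to produce the $S_2$ factor --- without checking that both identifications can be imposed independently; note also that the statement asserts the $S_2$ factor for $a=0$ as well, where the two fixed points are unmarked, so your mechanism does not account for it there at all. The paper instead argues through a reconstruction claim: the data consisting of the $k$ unordered points on $C'$, the two \emph{unordered} branch points, and a class in $\widetilde{\mu}_N^*$ determines $(C,P,\alpha)$ up to isomorphism. Making that correspondence precise --- in particular, analysing what happens when $\chi$ is assigned to one branch point rather than the other, which replaces $\alpha$ by $\alpha^{-1}$, and hence deciding whether each sector is the quotient of $\mathcal{M}_{0,k+2}$ by $S_k$ or by $S_k\times S_2$ --- is the heart of the proposition, and it is exactly the point your proposal passes over.
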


\begin{remark} \label{stackyremark} In \cite{agv2}, the authors introduce two notions related to the inertia stack: the \emph{stack of cyclotomic gerbes} (\cite[Definition 3.3.6]{agv2}) and the \emph{rigidified inertia stack} (\cite[3.4]{agv2}), showing in \cite[3.4.1]{agv2} that they are equivalent.  By substituting the quotients $\mathcal{M}_{0,k+2}/S_k$ with the \emph{stack} quotients $[\mathcal{M}_{0,k+2}/S_k]$ one obtains a stacky description of the rigidified inertia stack of $[\mathcal{M}_{0,n}/S_n]$. We have stated the earlier-mentioned proposition in this simplified way because this is enough for our purposes, and in this way we could avoid having to introduce the whole theory of inertia stack and its variants (see \cite[Section 3]{agv2}).
\end{remark}


We propose here a non-trivial check of the correctness of the delicate, albeit relatively elementary, result of Proposition \ref{inertian}. For a stack $X$ we have two notions of Euler characteristics: the topological Euler characteristic $\chi(X)$ of the associated coarse moduli space, and the \emph{orbifold} Euler characteristic $e(X)$. The latter is not necessarily an integer, and when $G$ is a finite group acting on a scheme $X$, such that $X=[Y/G]$, it satisfies $|G| \cdot e(X)= \chi(Y)$. It is well-known that these two quantities are related by the equality \begin{equation}\label{equat} \chi(X)=e(I(X)),\end{equation} see \cite[p.21]{behrend}. 
The latter equality provides a consistency check of the corrected version of Proposition \ref{inertian}, together with Remark \ref{stackyremark}. Let us fix $X= [\mathcal{M}_{0,n}/S_n]$: after multiplying by $\frac{1}{N}$ the orbifold Euler characteristic of each rigidified $I_N$, the right hand side of \eqref{equat} becomes
$$
\frac{-1}{n(n-1)(n-2)} + \frac{(-1)^{\frac{n}{2}}}{2(n-2)}- \frac{(-1)^{\frac{n}{2}}}{2n} - \sum_{\substack{N \textrm{ divides }(n-a),\\ a \in \{0,1,2\}, N >2}} (-1)^{\frac{n-a}{N}} \frac{\phi(N)}{\frac{(-1)^a+3}{2}(n-a)}, \textrm{ for } n \textrm{ even};
$$
$$
\frac{1}{n(n-1)(n-2)} - \sum_{\substack{N \textrm{ divides }(n-a),\\ a \in \{0,1,2\}, N >1}} (-1)^{\frac{n-a}{N}} \frac{\phi(N)}{\frac{(-1)^a+3}{2}(n-a)}, \textrm{ for } n \textrm{ odd.}
$$
A direct computation shows that both these expressions equal $1$: the left hand side of  \eqref{equat}. Here we used that
$$
\chi(\mathcal{M}_{0,n}/S_n)=1, \quad e(\mathcal{M}_{0,n})= (-1)^{n+1} (n-3)!;
$$
and the following elementary formula for the Euler totient function $\phi$:
$$
\sum_{d \textrm{ divides } n} (-1)^{\frac{n}{d}} \phi(d)= \begin{cases} 0 & n \textrm{ even}, \\ -n & n \textrm{ odd}.  \end{cases}
$$

\section{The inertia stack of moduli of smooth hyperelliptic curves}
\label{sezione3}
In this section we study the inertia stack of $\mathcal{H}_g$. We will implicitly use the fact that any family of hyperelliptic curves has a globally defined hyperelliptic involution, a result that follows from \cite[Theorem 5.5]{lonsted}. Let
$$
f\colon \mathcal{H}_g \to [\mathcal{M}_{0,2g+2}/S_{2g+2}]= \widetilde{\mathcal{M}}_{0,2g+2}
$$
be the map that associates to every hyperelliptic genus $g$ curve, the corresponding genus $0$ curve, together with the degree $2g+2$ \'etale Cartier divisor $D$ obtained by considering the branch locus of the hyperelliptic involution. This map is well defined on families as a consequence of \cite[Theorem 5.5]{lonsted}. 

Let $C\to C'=C/\langle \tau \rangle$ be a hyperelliptic curve, and $\alpha$ an automorphism of it. Then $\alpha$ induces an automorphism $\alpha^{red}$ of $C'$. If $D$ is the degree $2g+2$ branch divisor of $C \to C'$, then $\alpha^{red}$ induces a bijection on the set of reduced points of $D$. We can thus use the order of $\alpha^{red}$ ---instead of the order of $\alpha$---to reindex the components of the inertia stack:

\begin{definition} \label{reducedord} Let $I_N^{red}(\mathcal{H}_g)$ be the open and closed substack of $I(\mathcal{H}_g)$ whose objects correspond to pairs $(C, \alpha)$, where $C$ is an object of $\mathcal{H}_g$ and $\alpha^{red}\colon \mu_N \to \Aut (C/ \tau)$ is an injective homomorphism.
\end{definition} 

For our purposes, it is more convenient to work with $I_N^{red}(\mathcal{H}_g)$ than with the usual $I_N(\mathcal{H}_g)$. Note that of course we have in the end that
$$
I(\mathcal{H}_g)= \coprod_{N \in \mathbb{N}} I_N(\mathcal{H}_g)= \coprod_{N \in \mathbb{N}} I_N^{red}(\mathcal{H}_g)
$$
but with the latter decomposition, we have that the natural map of \ref{pullinertia}, $I'(f)\colon I(\mathcal{H}_g) \to f^*\left(I([\mathcal{M}_{0,2g+2}/S_{2g+2}])\right)$ induces maps:
\begin{displaymath}
I'(f)_N\colon I_N^{red}(\mathcal{H}_g) \to f^*\left(I_N([\mathcal{M}_{0,2g+2}/S_{2g+2}])\right) .
\end{displaymath}
This is not the case for the standard decomposition of the inertia stacks, since an automorphism of order $N$ of the genus $0$ curve can lift to an automorphism of order $N$ or to an automorphism of order $2N$ on the corresponding hyperelliptic curve.

Let $n=2g+2$ be the number of Weierstrass points, and $N=\ord( \alpha^{red})$. It is convenient to write (if possible) $n= kN+a$ for $k \in \mathbb{N}^+, a \in \{0,1,2\}$ (following the results of Section \ref{sezione2}). If $N>2$ such a decomposition of $n$ is unique. The number $a$ is the number of Weierstrass points whose image in the quotient via the hyperelliptic involution is a branch point for $\alpha^{red}$.

We label each twisted sector by a character. If $a$ is equal to zero, then there are four points in $C$ whose image in $C / \tau$ consists of the two points fixed by $\alpha^{red}$. In this case the automorphism $\alpha$ can: 
\begin{enumerate}
\item fix the four points;
\item exchange them two-by-two;
\item fix two of them and exchange the other two.
\end{enumerate}
In the first two cases, we label the twisted sectors by a pair $(\chi,1)$ or $(\chi,-1)$ respectively.

\begin{theorem} \label{inertiahg} For all $g \geq2$, we describe the twisted sectors of $\mathcal{H}_g$, the moduli stack of smooth hyperelliptic curves of genus $g$. Besides the untwisted sector, $I_1^{red}(\mathcal{H}_g)$ contains one more copy of $\mathcal{M}_{0,2g+2}/S_{2g+2}$, corresponding to the hyperelliptic involution. Let us now describe the cases when $N \geq 2$.
\begin{enumerate}
\item Suppose $N>2$. If there exists $a \in \{0,1,2\}$ such that $2 g+2=k N+a$: $$I_N^{red}(\mathcal{H}_{g})= \begin{cases} 
\coprod_{\chi \in \widetilde{\mu}_N^*, \lambda \in{\pm1}} \left(\mathcal{M}_{0,k+2}/S_k, (\chi,\lambda) \right)& a=0,  \ k \textrm{ even} \\
\coprod_{\chi \in \mu_N^*} \left(\mathcal{M}_{0,k+2}/S_k, \chi \right) & a=0, \ k \textrm{ odd}\\ 
\coprod_{\chi \in \mu_N^* \sqcup \mu_{2N}^*} \left(\mathcal{M}_{0,k+2}/S_k , \chi \right) & a=1 \\
\coprod_{\chi \in \widetilde{\mu}_{2N}^*} \left(\mathcal{M}_{0,k+2}/S_k, \chi \right) & a=2,  \ k \textrm{ even, } N \textrm{ even} \\
\coprod_{\chi \in  \widetilde{\mu}_{N}^*\sqcup \widetilde{\mu}_{2N}^*} \left(\mathcal{M}_{0,k+2}/S_k, \chi \right) & a=2,  \ k \textrm{ even, } N \textrm{ odd} \\
\coprod_{\chi \in \overline{\mu}_{2N}^*} \left(\mathcal{M}_{0,k+2}/S_k, \chi \right) & a=2,  \ k \textrm{ odd} 
\end{cases}$$
and $I_N^{red}(\mathcal{H}_g)$ is empty otherwise.
\item if $g$ is odd: $$I_2^{red}(\mathcal{H}_g) = \left(\mathcal{M}_{0, g+2}/S_g \times S_2, \zeta_4 \right) \coprod  \left(\mathcal{M}_{0, g+2}/S_g \times S_2,\zeta_4^3 \right)\coprod $$ $$\coprod \left(\mathcal{M}_{0, g+3}/S_{g+1} \times S_2,(-1,1) \right)\coprod \left(\mathcal{M}_{0, g+3}/S_{g+1}\times S_2,(-1,-1) \right) $$
where $\{\zeta_4, \zeta_4^3\} = \overline{\mu}_4^*= \mu_4^*$.
\item if $g$ is even: $$I_2^{red}(\mathcal{H}_g)= \left( \mathcal{M}_{0,g+2}/S_{g},-1 \right)\coprod \left( \mathcal{M}_{0,g+3}/S_{g+1},-1 \right) .$$

\end{enumerate}
\end{theorem}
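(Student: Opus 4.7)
My plan is to compute, for each twisted sector of $[\mathcal{M}_{0,2g+2}/S_{2g+2}]$ listed in Proposition~\ref{inertian}, the fiber of the natural map
\[
I'(f)_N \colon I_N^{red}(\mathcal{H}_g) \to f^*\bigl(I_N([\mathcal{M}_{0,2g+2}/S_{2g+2}])\bigr).
\]
Fix such a sector $(C',D,\alpha^{red})$ with $\ord(\alpha^{red})=N$, and let $\pi \colon C\to C'$ be the hyperelliptic double cover branched along $D$, with involution $\tau$. Since $\tau$ is central in $\Aut(C)$ by \cite[Theorem 5.5]{lonsted}, the lifts of $\alpha^{red}$ to $\Aut(C)$ are exactly $\alpha$ and $\tau\alpha$. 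Writing $\alpha^N=\tau^\epsilon$ with $\epsilon\in\{0,1\}$, we have $(\tau\alpha)^N=\tau^{N+\epsilon}$, so for $N$ odd exactly one lift has order $N$ and the other order $2N$, while for $N$ even both lifts share a common order determined by $\epsilon$. The first real step is to compute $\epsilon$ combinatorially: evaluating the $\mathbb Z/2$-monodromy of $\pi$ against a generator of $\langle\alpha^{red}\rangle$ gives $\epsilon\equiv kN+a\pmod 2$, with each of the $k$ orbits of length $N$ inside $D$ contributing $N\bmod 2$ and each of the $a$ fixed branch points contributing $1$. This parity count is what arbitrates, in every sub-case, between a labeling set of $\mu_N^*$-type and one of $\mu_{2N}^*$-type.

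Next I would unpack part~(1) by cases on $a\in\{0,1,2\}$, reading the character data off the action of $\alpha$ on the fixed locus of $\alpha^{red}$. For $a=0$, each fixed point of $\alpha^{red}$ has two preimages in $C$ which $\alpha$ either fixes or swaps; when $k$ is even the choice is genuine and yields the decoration $\lambda\in\{\pm1\}$, whereas for $k$ odd the parity forces a coherent swap and collapses $\widetilde\mu_N^*\times\{\pm1\}$ down to $\mu_N^*$. For $a=1$ the single Weierstrass preimage of the branch-type fixed point produces a tangent character of $\alpha$, which lies in $\mu_N^*\sqcup\mu_{2N}^*$ according to $\epsilon$. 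For $a=2$ the two Weierstrass preimages yield a pair of tangent characters modulo the $\mathbb Z/2$ that unorders the two branch points, producing $\widetilde\mu_{2N}^*$, $\widetilde\mu_N^*\sqcup\widetilde\mu_{2N}^*$, or $\overline\mu_{2N}^*$ according to the parities of $k$ and $N$; the twist $\zeta\mapsto-\zeta^{-1}$ defining $\overline\mu_{2N}^*$ records the fact that the swap of the two Weierstrass points is implemented by $\tau$, which rotates their tangent spaces by $-1$. The cases $N=2$ in parts~(2)--(3) are specializations of this analysis, with $\widetilde\mu_2^*=\mu_2^*=\{-1\}$; for $g$ even the singleton-character situation causes the $S_2$ factor of Proposition~\ref{inertian} to merge into the labeling data, accounting for its absence in part~(3).

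The main obstacle will be the $a=2$ bookkeeping: the parity formula for $\epsilon$ must be aligned with the correct $\mathbb Z/2$-quotient in each of the sub-cases $(k\text{ even}, N\text{ even})$, $(k\text{ even}, N\text{ odd})$, $(k\text{ odd})$, and the action of the swap on tangent eigenvalues through the twist by $\tau$ has to be tracked carefully. Fortunately, Proposition~\ref{inertian} has already performed the analogous counting for the base, so the whole proof reduces to overlaying the $\mathbb Z/2$-cover data on each stratum and verifying that the resulting indexing sets match the statement case by case.
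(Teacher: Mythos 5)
Your overall strategy coincides with the paper's: analyze the $2{:}1$ \'etale cover $I'(f)_N\colon I_N^{red}(\mathcal{H}_g)\to f^*\left(I_N([\mathcal{M}_{0,2g+2}/S_{2g+2}])\right)$ sector by sector over Proposition \ref{inertian}, read the labels off the tangent action at the points of $\pi^{-1}(R)$, and single out $g$ even, $N=2$ as the case where the cover is the nontrivial one realized by forgetting the $S_2$. The gap is in what you call the first real step. Your parity formula $\epsilon\equiv kN+a\pmod 2$ for $\alpha^N=\tau^{\epsilon}$ is vacuous and false: since $kN+a=2g+2$, it asserts $\epsilon=0$ in every case, whereas for $a=2$ and $N$ even one has $\epsilon=1$. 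Indeed, writing the curve as $y^2=x(x^N-\alpha_1)\cdots(x^N-\alpha_k)$ with lifts $x\mapsto\zeta_N^i x$, $y\mapsto\pm\zeta_{2N}^i y$, one gets $\alpha^N\colon y\mapsto(\pm1)^N\zeta_{2N}^{iN}y=-y$, i.e.\ $\alpha^N=\tau$; equivalently, at a Weierstrass point over a branch-type fixed point the local model is $t^2=x$, so any lift acts there by a $2N$-th root of unity $\eta$ with $\eta^2=\zeta_N^i$, and $\eta^N=(-1)^i=-1$ when $N$ is even. (For $N$ odd, $\epsilon$ is not even well defined, since replacing $\alpha$ by $\tau\alpha$ changes it.) Because you explicitly use this parity count to ``arbitrate, in every sub-case, between a labeling set of $\mu_N^*$-type and one of $\mu_{2N}^*$-type,'' the error propagates: with $\epsilon=0$ you would label the $a=2$, $N$ even sectors by $\widetilde{\mu}_N^*$-type sets rather than by $\widetilde{\mu}_{2N}^*$ and $\overline{\mu}_{2N}^*$, so the correct rows of the theorem you state for $a=2$ are asserted rather than derived.

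The repair is exactly what the paper does: replace the global monodromy heuristic by an explicit computation of the lifts, either from the normal forms $y^2=\prod(x^N-\alpha_j)$ and $y^2=x\prod(x^N-\alpha_j)$ with $y\mapsto\pm y$, respectively $y\mapsto\pm\zeta_{2N}^i y$, or from the local model at the branch-type fixed points as above. This also corrects a second, smaller inaccuracy: the twist $\zeta\mapsto-\zeta^{-1}$ defining $\overline{\mu}_{2N}^*$ for $a=2$, $k$ odd does not come from the swap being ``implemented by $\tau$'' (the two Weierstrass points over $R$ are fixed by $\tau$, and $\tau$ acts trivially on $C'$, so it implements no swap); it comes from the coordinate change at $\infty$, which introduces the factor $\zeta_N^{ig}=(-1)^{ik}$ relating the two tangent characters, and this is $-1$ precisely when $k$ is odd. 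With those two points fixed, your case analysis for $a=0$ (the $k$-parity dichotomy), $a=1$, and the $g$ even, $N=2$ collapse matches the paper's argument.
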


\begin{proof}
First we observe that the morphism of \eqref{pullinertia}:
$$
I(f)_N\colon f^*\left(I_N([\mathcal{M}_{0,2g+2}/S_{2g+2}])\right) \to I_N([\mathcal{M}_{0,2g+2}/S_{2g+2}])
$$
is a $\mu_2$-gerbe, and as such it induces an isomorphism at the level of coarse moduli spaces.

Let us consider then
$$
I'(f)_N\colon I_N^{red}(\mathcal{H}_g) \to f^*\left(I_N([\mathcal{M}_{0,2g+2}/S_{2g+2}])\right).
$$
This map is a $2:1$ \'etale cover because every automorphism of a genus $0$ curve with an invariant smooth effective divisor of degree $2g+2$ can be lifted exactly to two automorphisms of the corresponding hyperelliptic curve. To prove the two points $(1)$ and $(2)$, we prove that this is the trivial cover, and then apply the result of Proposition \ref{inertian}. To prove point $(3)$, we show that in the particular case when $N= \ord (\alpha^{red})=2$ and $g$ is even, a lifting of $\alpha^{red}$ corresponds to a choice of a distinguished point $p$ in $D$, the branch divisor of $C \to C'$.

Let $C$ be a hyperelliptic curve, $\alpha$ an automorphism of it and $\tau$ the hyperelliptic involution. We have the two projections on the quotient:
$$\xymatrix{
C \ar[r]^{\hspace{-0.3cm}\pi} & C/\langle \tau \rangle \ar[r]^{\hspace{-0.3cm}p_N} & C/ \langle \tau, \alpha^{red} \rangle}.
$$
After choosing suitable coordinates on $C/\langle\tau \rangle \cong \mathbb{P}^1$ and $C/ \langle \tau, \alpha^{red} \rangle \cong \mathbb{P}^1$, the map $p_N$ is simply the map $z \to z^N$. Let $R$ be the set of ramification points of $p_N$. The number of points in $R$ that are branch points for $\pi$ is then $a$, by its very definition.

Now we study separately the three cases $a=0,1,2$.

\underline{If $a$ is equal to $0$}, then a hyperelliptic curve $C$ that admits an automorphism $\alpha$ of reduced order $N$ can be written as
$$
y^2=(x^N- \alpha_1) (x^N- \alpha_2) \ldots (x^N- \alpha_k)
$$
with the automorphism $\alpha$:
$$
\begin{cases}x \to \zeta_N^i x\\ y \to \pm y.
\end{cases}
$$
Exchanging the coordinates $0, \infty$, the action of $\alpha$ becomes:
$$
\begin{cases}x \to \zeta_N^{-i} x\\ y \to \pm (\zeta_N)^{i(g+1)} y.
\end{cases}
$$
If $k$ is odd, then $\alpha$ fixes two of the points in $\pi^{-1}(R)$ and exchanges the other two. The action of $\alpha$ on the two fixed fibers determines the same character in $\mu_N^*$. If $k$ is even, then $\alpha$ can either fix the four points of $\pi^{-1}(R)$ or exchange them two-by-two. In both the cases, the action of $\alpha$ or $\alpha \tau$ on the four fixed points determines an element of $\widetilde{\mu}_N^*$.

\underline{If $a$ is equal to $1$}, then a hyperelliptic curve $C$ that admits an automorphism $\alpha$ of reduced order $N$ can be written as
$$
y^2=x (x^N- \alpha_1) (x^N- \alpha_2) \ldots (x^N- \alpha_k)
$$
with the automorphism $\alpha$:
$$
\begin{cases}x \to \zeta_N^i x\\ y \to \pm \zeta_{2N}^i y.
\end{cases}
$$
In this case, if we call $p$ the point in $R$ that is also a branch point of $\pi$, then the action of $\alpha$ on $T_{\pi^{-1}(p)}$ determines a well-defined element of $\mu_N^*$ or $\mu_{2N}^*$.

\underline{If $a$ is equal to $2$}, then a hyperelliptic curve $C$ that admits an automorphism $\alpha$ of reduced order $N$ can be written as
$$
y^2=x (x^N- \alpha_1) (x^N- \alpha_2) \ldots (x^N- \alpha_k)
$$
with the automorphism $\alpha$:
$$
\begin{cases}x \to \zeta_N^i x\\ y \to \pm \zeta_{2N}^i y.
\end{cases}
$$
Exchanging the coordinates $0, \infty$, the action of $\alpha$ becomes
$$
\begin{cases}x \to \zeta_N^{-i} x\\ y \to \pm \zeta_{2N}^{-i} (\zeta_N)^{i g} y.
\end{cases}
$$
In this case, the action of $\alpha$ fixes the two points in $\pi^{-1}(R)$. Then $\alpha$ induces a well-defined element of $\widetilde{\mu}_{2N}^*$ when $k$ is even and $N$ is even, of $\widetilde{\mu}_N^* \sqcup \widetilde{\mu}_{2N}^*$ when $k$ is even and $N$ is odd, and of $\overline{\mu}_{2N}^*$ when $k$ is odd (and therefore $N$ is even).

Now for the point $(2)$, it is enough to check that our separate study in the different cases $a=0,1,2$ carries on also when $N= \ord(\alpha^{red})=2$, if $g$ is odd.

The two remaining cases are when $g$ is even, $N=2$; therefore $a$ is equal to zero (then $k=g+1$ is odd), or $a$ is equal to two (then $k=g$ is also even). We have that $\mu_2^*= \widetilde{\mu}_4^*= \widetilde{\mu}_2^*$. In these cases, the action of $\alpha$ on $\pi^{-1}(R)$ distinguishes the two points of $R$.  For example, if $a=2$, $k$ even, then $\alpha$ acts on the two points of $\pi^{-1}(R)$, on one of them with character $\zeta_4$ and on the other with character $\zeta_4^3$. In these cases therefore, the two $2:1$ \'etale covers, at the level of coarse moduli spaces, are the two quotient maps:
$$
\mathcal{M}_{0,g+2}/S_g \to \mathcal{M}_{0,g+2}/S_g \times S_2 \quad \textrm{and} \quad \mathcal{M}_{0,g+3}/S_{g+1} \to \mathcal{M}_{0,g+3}/S_{g+1} \times S_2.
$$
\end{proof}

\begin{remark} The earlier-mentioned theorem could be restated as a stack-theoretic description of the rigidified inertia stack of $\mathcal{H}_g$ (cfr. Remark \ref{stackyremark}), further rigidified along the hyperelliptic involution. Each (doubly rigidified, stacky) twisted sector is the \emph{stack} quotient $[\mathcal{M}_{0,k+2}/S]$ in place of $\mathcal{M}_{0,k+2}/S$.
\end{remark}

We observe that the consistency check we used to verify the correctness of Proposition \ref{inertian}, see \eqref{equat} and comments thereof, does not produce any further check on Theorem \ref{inertiahg}. Indeed, the most delicate point of Theorem \ref{inertiahg} is understanding when the double covers $I'(f)_N$ are trivial and when they are not, a distinction that does not affect the orbifold Euler characteristic.

\section{The orbifold cohomology of moduli of smooth hyperelliptic curves}
\label{sezione4}
Here we compute the orbifold Poincar\'e polynomial for moduli of smooth hyperelliptic curves (see Definition \ref{defcoomorb2}). 

Let us fix a hyperelliptic curve $C$ of genus $g$. A basis for the cotangent space $(T_C \mathcal{H}_g)^{\vee}$ is given by
$$
\left(\frac{d X}{Y}\right)^2 \quad X\left(\frac{d X}{Y}\right)^2 \quad \ldots \quad X^{2g-2} \left(\frac{d X}{Y}\right)^2
$$
If $\alpha$ is an automorphism of $C$, it is straightforward to compute its action on each element of such a basis.
What we have done so far gives us the possibility of writing a closed formula for $P_{\mathcal{H}_g}^{CR}$ for fixed $g\geq 2$.

\begin{theorem} \label{principale} The orbifold Poincar\'e polynomial of moduli of smooth hyperelliptic curves is given by the formula
\begin{displaymath}
P^{CR}_{\mathcal{H}_g}(q)=\sum_{(k,N,i) \in A_{2g+2}} q^{a_g(i,N)} P^0_{k+2;k,1,1}(q)+  \sum_{(k,N,i) \in A_{2g+1}} 2 q^{b_g(i,N)} P^0_{k+2;k,1,1}(q)+  
\end{displaymath}
\begin{displaymath}
+\sum_{(k,N,i) \in A_{2g}} q^{b_g(i,N)}  P^0_{k+2;k,1,1}(q) +2+\begin{cases} q^{\frac{g-1}{2}} P^0_{g+3;g+1,1,1}(q)+ q^{\frac{g}{2}} P^0_{g+2;g,1,1}(q)& \textrm{if } g \textrm{ is even}\\2q^{\frac{g-1}{2}} P^0_{g+3;g+1,2}(q)+2 q^{\frac{g}{2}} P^0_{g+2;g,2}(q) & \textrm{if } g \textrm{ is odd}, 
\end{cases} 
\end{displaymath}
where the sets of indices are defined as $$A_{n}:=\left\{(k,N,i) \in \mathbb{N}^2 \times \mathbb{Z}_N^*| \ N>2, \ k N= n \right\}$$
and the exponents are
$$
a_g(i,N):= 2\left( 2g-1- \sum_{j=1}^{2g-1} \left\{ \frac{i (j+1)}{N}\right\} \right)
$$
$$
b_g(i,N):= 2 \left( 2g-1 -\sum_{j=1}^{2g-1} \left\{ \frac{i j}{N}\right\} \right).
$$

\end{theorem}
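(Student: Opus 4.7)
The plan is to compute $P^{CR}_{\mathcal{H}_g}(q)$ by summing, over each connected component $Y$ of $I(\mathcal{H}_g)$ enumerated in Theorem \ref{inertiahg}, the quantity $q^{2a(Y)} P(Y,q)$, where $P(Y,q)$ is the ordinary Poincar\'e polynomial of the coarse moduli of $Y$. All the ordinary Poincar\'e polynomials we need are either $1$ (for components whose coarse space is $\mathcal{H}_g$ itself, by \cite[Theorem 2.13]{kisinlehrer}) or are of the form $P^0_{k+2;k,1,1}$ or $P^0_{k+2;k,2}$, recalled in Section \ref{sezione2}. So the real work is the age calculation.

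I first dispose of the two trivial components. The untwisted sector $I_1(\mathcal{H}_g)$ contributes $1$, and the sector of $I_2^{\mathrm{red}}(\mathcal{H}_g)$ with $\alpha^{\mathrm{red}} = \mathrm{id}$, corresponding to $\alpha$ equal to the hyperelliptic involution $\tau$, is isomorphic to $\mathcal{H}_g$ and again has trivial ordinary cohomology. Since $\tau$ fixes $X$ and sends $Y$ to $-Y$, it acts trivially on each basis element $X^j(dX/Y)^2$, so its age is $0$; together these account for the additive constant $2$ in the formula.

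Next I handle the sectors with $N := \ord(\alpha^{\mathrm{red}}) > 2$, indexed as in Theorem \ref{inertiahg}(1) by decompositions $2g+2 = kN + a$ with $a \in \{0,1,2\}$ and by characters. The explicit models from the proof of Theorem \ref{inertiahg} give the action of $\alpha$ on the chosen cotangent basis: when $a = 0$ one has $X \mapsto \zeta_N^i X$, $Y \mapsto \pm Y$, so $X^j(dX/Y)^2 \mapsto \zeta_N^{i(j+2)} X^j(dX/Y)^2$ for $j = 0, \ldots, 2g-2$; when $a \in \{1,2\}$ one has $X \mapsto \zeta_N^i X$, $Y \mapsto \pm \zeta_{2N}^i Y$, so $X^j(dX/Y)^2 \mapsto \zeta_N^{i(j+1)} X^j(dX/Y)^2$. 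Inverting characters to pass from the cotangent to the tangent representation and applying the definition of the age produces $a(Y) = 2g-1 - \sum_{j=1}^{2g-1} \{i(j+1)/N\}$ in the first case and $a(Y) = 2g-1 - \sum_{j=1}^{2g-1} \{ij/N\}$ in the second; multiplying by $2$ gives precisely the exponents $a_g(i,N)$ and $b_g(i,N)$. The disjoint unions in Theorem \ref{inertiahg}(1) indexed by $\widetilde{\mu}_N^*$, $\mu_N^* \sqcup \mu_{2N}^*$, $\widetilde{\mu}_{2N}^*$, $\overline{\mu}_{2N}^*$, etc.\ are re-indexed by $i \in \mathbb{Z}_N^*$, producing the three sums over $A_{2g+2}, A_{2g+1}, A_{2g}$ with the correct prefactors (the factor $2$ in front of the $A_{2g+1}$ sum reflects the appearance of both $\mu_N^*$ and $\mu_{2N}^*$ in the $a = 1$ case).

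Finally, for $N = 2$ I apply Theorem \ref{inertiahg}(2)--(3) and run the same age computation with $i$ ranging over $\mu_2^*$ or $\mu_4^*$ as appropriate. This yields the last line of the stated formula, with exponents $(g-1)/2$ and $g/2$ on the relevant components, and with the parity-of-$g$ case distinction inherited directly from Theorem \ref{inertiahg}. I expect the main obstacle to be precisely the bookkeeping that collapses the several character sets $\widetilde{\mu}_N^*$, $\overline{\mu}_{2N}^*$, $\mu_N^* \sqcup \mu_{2N}^*$, etc.\ into a uniform indexing by $\mathbb{Z}_N^*$ with the correct multiplicities; once this combinatorial reorganization is done, the age formulas assemble automatically into the stated orbifold Poincar\'e polynomial.
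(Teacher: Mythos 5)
You take essentially the same route as the paper: the paper's own justification of Theorem \ref{principale} consists precisely of Theorem \ref{inertiahg} for the list of twisted sectors, the Kisin--Lehrer Poincar\'e polynomials $P^0$ of Section \ref{sezione2} for their ordinary cohomology, and the age computation carried out on the cotangent basis $X^j\left(\frac{dX}{Y}\right)^2$ via the explicit normal forms appearing in the proof of Theorem \ref{inertiahg}, which is exactly the structure of your argument (the untwisted and hyperelliptic-involution sectors giving the constant $2$, the $N>2$ sectors giving the three sums over $A_{2g+2}$, $A_{2g+1}$, $A_{2g}$, and the $N=2$ sectors giving the final case distinction). The paper itself leaves the character bookkeeping and the evaluation of the ages implicit, so your proposal is, if anything, more explicit than the published argument.
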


\begin{remark} By substituting all the Poincar\'e polynomials on the right-hand side of Theorem \ref{principale} with $1$, one gets the \virg{polynomial} whose coefficients in degree $i \in \mathbb{Q}^{\geq 0}$ are the dimensions of the stringy Chow group of degree $i$ (cf. Remark \ref{stringychow}). This is so because all the twisted sectors of $\mathcal{H}_g$ have trivial Chow group, since their coarse moduli spaces are quotients of affine sets.
\end{remark}
In particular, we can write closed formulas for the total dimensions of the orbifold cohomology of $\mathcal{H}_g$. Let us define
$$h_{CR}(g):= \dim H^*_{CR}(\mathcal{H}_g).$$
We denote with $\phi$ the Euler totient function. Then we can give a corollary of 
Theorem \ref{inertiahg}:
\begin{corollary} The following explicit formulas for the function just introduced hold:
\begin{enumerate}
\item If $g$ is even, $n=2g+2$:
$$
h_{CR}(g)= 3+ 2g +  2 \sum_{\substack{ n=kN+a, \\N>2, a \in \{0,1,2 \}}} k \phi(N).
$$
\item If $g$ is odd, $n=2g+2$:
$$
h_{CR}(g)= 2+ 4\left( \lfloor \frac{n-2}{4} \rfloor +\lfloor \frac{n-1}{4} \rfloor \right) +2 \sum_{\substack{ n=kN+a, \\N>2, a \in \{0,1,2 \}}} k \phi(N).
$$
\end{enumerate}
\end{corollary}
\medskip

\section{Future directions}

A series of natural questions arise as a consequence of this work. 
\begin{enumerate}
\item One can address the question of studying the orbifold cohomology of the Deligne--Mumford compactification $\overline{\mathcal{H}}_g$. Using the theory of admissible double covers, it is straightforward to extend Proposition \ref{inertian} and Theorem \ref{inertiahg} to describe (respectively) the coarse moduli spaces of the closures of the inertia stacks of $[{\mathcal{M}}_{0,n}/S_n]$ and ${\mathcal{H}}_g$ inside the inertia stacks of $[\overline{\mathcal{M}}_{0,n}/S_n]$ and $\overline{\mathcal{H}}_g$ (a similar study is performed for $\mathcal{M}_3$ in \cite[Section 4]{paganitommasi}). It is an interesting combinatorial problem to describe and study all the remaining twisted sectors of $[\overline{\mathcal{M}}_{0,n}/S_n]$ and $\overline{\mathcal{H}}_g$.
\item Can one give a reasonably understandable description of the ring structure, thus computing the Chen--Ruan cohomology ring of $\mathcal{H}_g$ and $\overline{\mathcal{H}}_g$? This would require a good description of the so-called second inertia stack, or of the moduli stack of $3$-pointed stable maps\footnote{See \cite{abvis2} for the moduli space of stable maps to a Deligne--Mumford stack.}:
$$
\overline{\mathcal{M}}_{0,3}(\mathcal{H}_g, 0), \quad \overline{\mathcal{M}}_{0,3}(\overline{\mathcal{H}}_g, 0),
$$
in the same spirit of Theorem \ref{inertiahg}. See \cite{pagani1} and \cite{pagani2} for a description of the Chen--Ruan cohomology ring of $\overline{\mathcal{M}}_{1,n}$ and $\overline{\mathcal{M}}_{2,n}$.
\item Another interesting geometric question is to study the inertia stack for the moduli stacks of $k$-gonal curves, $k>2$. In the case of the moduli stacks of \emph{cyclic} $k$-gonal covers, one can adopt the same strategy of the present paper and solve the problem in two steps: studying the twisted sectors of the quotient stack of $\mathcal{M}_{0,n}$ by a certain subgroup of $S_n$ (in analogy with \ref{inertian}), and studying a $\mu_k$-torsor over each such twisted sector (in analogy with \ref{inertiahg}).
\end{enumerate}
\ \\
\begin{center}
\textsc{Acknowledgments} 
\end{center}
The author is grateful to Gilberto Bini, Torsten Ekedahl, Carel Faber and Barbara Fantechi, for useful discussions and help. The author is grateful to the anonymous referees for useful suggestions and remarks.
\begin{center}
\textsc{Funding} 
\end{center}
This project was supported by the Wallenberg foundation, grant KAW 2005.0098, and took place at KTH Royal Institute of Technology. It was also partly supported  by   {\sc prin}    ``Geometria delle variet\`a algebriche e dei loro spazi di moduli'', by Istituto Nazionale di Alta Matematica.




\end{document}